\newtheorem{theorem}{Theorem}
\theoremstyle{plain}
\newtheorem{corollary}{Corollary}
\newtheorem{definition}{Definition}
\newtheorem{lemma}{Lemma}
\numberwithin{equation}{section}
\begin{document}

\begin{center}
\vskip 2cm
{\Large \bf
On the Norms of Circulant and \bigskip $r-$Circulant Matrices With the Hyperharmonic Fibonacci Numbers}\\
\vskip 1cm

\vskip 0.5cm{\large Naim TUGLU$^{a}$ and Can KIZILATE\c{S}$^{b}$} \\
\vskip 0.5cm

$^{a}$Gazi University, Faculty of Art and Science, Department of Mathematics,\smallskip \\
Teknikokullar 06500, Ankara-Turkey \smallskip \\
{\tt naimtuglu@gazi.edu.tr}\\
\vskip 0.5cm
$^{b}$B\"{u}lent Ecevit University, Faculty of Art and Science, Department of Mathematics, \smallskip\\67100, Zonguldak-Turkey \smallskip \\
{\tt cankizilates@gmail.com}
\vskip 0.8cm
\end{center}


\begin{abstract}
In this paper, we study norms of circulant and $r-$circulant matrices involving harmonic Fibonacci and hyperharmonic Fibonacci numbers. We obtain inequalities by using matrix norms.
\smallskip \\
\noindent \textbf{Keywords:} Hyperharmonic Fibonacci number, $r-$Circulant Matrix Matrix norm  \smallskip \\
\textbf{Mathematics Subject Classification:} 11B39; 15A60 
\end{abstract}

\thispagestyle{empty}
\section{Introduction}

The circulant and $r-$circulant matrices have connection to signal
processing, probability, numerical analysis, coding theory and many other
areas. An $n\times n$ matrix $C_{r}$ is called an $r-$circulant matrix
defined as follows:
	\begin{equation*}
		C_{r}=
			\begin{pmatrix}
				c_{0} & c_{1} & c_{2} & \ldots & c_{n-2} & c_{n-1} \\
				rc_{n-1} & c_{0} & c_{1} & \ldots & c_{n-3} & c_{n-2} \\ 
				rc_{n-2} & rc_{n-1} & c_{0} & \ldots & c_{n-4} & c_{n-3} \\
				\vdots & \vdots & \vdots &  & \vdots & \vdots \\ 
				rc_{1} & rc_{2} & rc_{3} & \ldots & rc_{n-1} & c_{0}
			\end{pmatrix}.
	\end{equation*}%
Since the matrix $C_{r}$ is determined by its row elements and $r$, we denote $C_{r}=Circ(c_{0},c_{1},c_{2},\ldots ,c_{n-1})$. In particular for $r=1$
	\begin{equation*}
		C=
			\begin{pmatrix}
				c_{0} & c_{1} & c_{2} & \ldots & c_{n-2} & c_{n-1} \\ 
				c_{n-1} & c_{0} & c_{1} & \ldots & c_{n-3} & c_{n-2} \\ 
				c_{n-2} & c_{n-1} & c_{0} & \ldots & c_{n-4} & c_{n-3} \\ 
				\vdots & \vdots & \vdots &  & \vdots & \vdots \\ 
				c_{1} & c_{2} & c_{3} & \ldots & c_{n-1} & c_{0}%
			\end{pmatrix}%
	\end{equation*}
is called a circulant matrix and we denote it shortly by $C=Circ(c_{0},c_{1},c_{2},\ldots ,c_{n-1})$. The eigenvalues of $C$ are
	\begin{equation}
		\lambda _{j}=\sum\limits_{i=0}^{n-1}c_{i}(w^{j})^{i}  \label{0}
	\end{equation}%
where $w=e^{\frac{2\pi i}{n}}$ and $i=\sqrt{-1}$.

Many authors investigate on the norms of circulant and $r-$circulant matrices. In \cite{1}, Solak studied the lower and upper bounds for the spectral norms of circulant matrices with classical Fibonacci and Lucas numbers entries. In \cite{2}, Kocer and et al. obtained norms of circulant and semicirculant matrices with Horadams numbers. In \cite{7}, Zhou and et all. gave spectral norms of circulant-type matrices involving binomial coefficients and harmonic numbers.
In \cite{4}, Zhou calculated spectral norms for circulant matrices with binomial coefficients combined with Fibonacci and Lucas numbers entries. In \cite{3}, Shen and Cen have given upper and lower bounds for the spectral norms of $r-$ circulant matrices with classical Fibonacci and Lucas numbers entries. In \cite{5}, Bah\c{s}i and Solak computed the spectral norms of circulant and $r-$circulant matrices with the hyper-Fibonacci and hyper-Lucas numbers. In \cite{6}, Jiang and Zhou studied spectral norms of even-order $r-$ circulant matrices.

Motivated by the above papers, we compute the spectral norms and Euclidean norm of circulant and $r-$circulant matrices with the harmonic and hyperharmonic Fibonacci entries. The scheme of this paper is as follows. In section 2, we present some definitions, preliminaries and lemmas related to our study. In section 3, we calculate spectral norms of circulant matrix with harmonic Fibonacci entries. Moreover we obtain Euclidean norms of $r-$circulant matrices and give lower and upper bounds for the spectral norms of $r-$circulant matrices with harmonic and hyperharmonic Fibonacci entries.


\section{Preliminaries}

The Fibonacci numbers $F_{n}$ are defined by the following recurrence relation for $n\geq 1$,
	\begin{equation*}
		F_{n+1}=F_{n}+F_{n-1}
	\end{equation*}%
where $F_{0}=0$, $F_{1}=1$. In \cite{8}, authors investigated finite sum of the reciprocals of Fibonacci numbers
	\begin{equation*}
		\mathbb{F}_{n}=\sum_{k=1}^{n}\frac{1}{F_{k}},
	\end{equation*}%
which is called harmonic Fibonacci numbers. Then they gave a combinatoric identity related to harmonic Fibonacci numbers as follows:
	\begin{equation}
		\sum_{k=0}^{n-1}F_{k-1}\mathbb{F}_{k}=F_{n}\mathbb{F}_{n}-n.  \label{3}
	\end{equation}
Moreover in \cite{8}, they defined hyperharmonic numbers for $n,r\geq 1$ 
	\begin{equation*}
		\mathbb{F}_{n}^{(r)}=\sum\limits_{k=1}^{n}\mathbb{F}_{k}^{(r-1)}
	\end{equation*}%
where $\mathbb{F}_{n}^{(0)}=\frac{1}{F_{n}}$ and $\mathbb{F}_{0}=0.$ At this point, we give some definitions and lemmas related to our study.

\begin{definition}\label{Definition2.1}
Let $A=(a_{ij})$ be any $m\times n$ matrix. The Euclidean norm of matrix $A$ is
	\begin{equation*}
		\left\Vert A\right\Vert _{E}=\sqrt{\left(\sum_{i=1}^{m}\sum_{j=1}^{n}\left\vert a_{ij}\right\vert ^{2}\right) .}
	\end{equation*}
\end{definition}

\begin{definition}
	Let $A=(a_{ij})$ be any $m\times n$ matrix. The spectral norm of matrix $A$ is
		\begin{equation*}
			\left\Vert A\right\Vert _{2}=\sqrt{\max_{1\leq i\leq n}\lambda _{i}(A^{H}A)},
		\end{equation*}%
	where $\lambda _{i}(A^{H}A)$ is eigenvalue of $A^{H}A$ and $A^{H}$ is conjugate transpose of matrix $A$.
\end{definition}

Then the following inequalities hold for between Euclidean norm and spectral norm as follow;
	\begin{equation}
		\frac{1}{\sqrt{n}}\left\Vert A\right\Vert _{E}\leq \left\Vert A\right\Vert_{2}\leq \left\Vert A\right\Vert _{E},  \label{7}
	\end{equation}%
	\begin{equation}
		\left\Vert A\right\Vert _{2}\leq \left\Vert A\right\Vert _{E}\leq \sqrt{n}\left\Vert A\right\Vert _{2}.  \label{8}
	\end{equation}
\begin{lemma}\label{Lemma2.1} \cite{9}
	 Let $A$ and $B$ be two $m\times n$ matrices. Then we have%
		\begin{equation*}
			\left\Vert A\circ B\right\Vert _{2}\leq \left\Vert A\right\Vert_{2}\left\Vert B\right\Vert _{2}
		\end{equation*}
	where $A\circ B$ is the Hadamard product of $A$ and $B$.
\end{lemma}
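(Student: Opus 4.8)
The plan is to realize the Hadamard product $A\circ B$ as a submatrix of the Kronecker product $A\otimes B$, and then combine two standard facts: that the spectral norm does not increase when one passes to a submatrix, and that the spectral norm is multiplicative under Kronecker products.

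First I would recall the shape of $A\otimes B$. If $A,B$ are $m\times n$, then $A\otimes B$ is the $m^{2}\times n^{2}$ matrix whose rows are indexed by pairs $(i,k)$ and whose columns by pairs $(j,\ell)$, with $((i,k),(j,\ell))$ entry equal to $a_{ij}b_{k\ell}$. Retaining only the rows with $k=i$ and the columns with $\ell=j$ produces precisely the $m\times n$ matrix whose $(i,j)$ entry is $a_{ij}b_{ij}$, that is, $A\circ B$. Equivalently, there exist $0$--$1$ matrices $S$ of size $m\times m^{2}$ with orthonormal rows and $T$ of size $n^{2}\times n$ with orthonormal columns such that $A\circ B=S\,(A\otimes B)\,T$.

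Next, using submultiplicativity of the spectral norm together with $\left\Vert S\right\Vert_{2}=\left\Vert T\right\Vert_{2}=1$, one gets $\left\Vert A\circ B\right\Vert_{2}\le\left\Vert A\otimes B\right\Vert_{2}$. Finally, from the identity $(A\otimes B)^{H}(A\otimes B)=(A^{H}A)\otimes(B^{H}B)$ and the fact that the eigenvalues of a Kronecker product of Hermitian matrices are the pairwise products of the eigenvalues of the factors, the singular values of $A\otimes B$ are exactly the products $\sigma_{p}(A)\,\sigma_{q}(B)$; hence $\left\Vert A\otimes B\right\Vert_{2}=\sigma_{1}(A)\sigma_{1}(B)=\left\Vert A\right\Vert_{2}\left\Vert B\right\Vert_{2}$, which yields the lemma. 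The only point that needs any care is the index bookkeeping in the submatrix step together with the (routine) determination of the singular values of $A\otimes B$; alternatively one could argue directly from the singular value decompositions of $A$ and $B$, expanding $A\circ B$ into rank-one terms built from Hadamard products of the singular vectors, but that forces one to estimate a sum of non-orthogonal rank-one pieces, so the submatrix route is the cleaner one.
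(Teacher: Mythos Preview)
Your argument is correct and is exactly the standard proof one finds in the cited reference: realize $A\circ B$ as a submatrix of $A\otimes B$ via selection matrices with unit spectral norm, then use $\left\Vert A\otimes B\right\Vert_{2}=\left\Vert A\right\Vert_{2}\left\Vert B\right\Vert_{2}$. The paper itself supplies no proof of this lemma; it is simply quoted from \cite{9}, so there is nothing further to compare.
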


\begin{lemma}
	\cite{9} Let $A$ and $B$ be two $n\times m$ matrices. We have%
		\begin{equation*}
			\left\Vert A\circ B\right\Vert _{2}\leq r_{1}(A)c_{1}(B),
		\end{equation*}
	where
		\begin{equation*}
			r_{1}(A)=\max_{1\leq i\leq m}\sqrt{\sum_{j=1}^{n}\left\vert a_{ij}\right\vert ^{2},}
		\end{equation*}%
		\begin{equation*}
			c_{1}(B)=\max_{1\leq j\leq n}\sqrt{\sum_{i=1}^{m}\left\vert b_{ij}\right\vert ^{2}.}
		\end{equation*}
\end{lemma}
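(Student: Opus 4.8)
The plan is to use the variational (bilinear‑form) description of the spectral norm together with a single application of the Cauchy--Schwarz inequality. Recall that for any matrix $M$ (here of size $n\times m$) one has
\[
\left\Vert M\right\Vert _{2}=\max_{\substack{x\in\mathbb{C}^{m},\ \left\Vert x\right\Vert =1\\ y\in\mathbb{C}^{n},\ \left\Vert y\right\Vert =1}}\left\vert y^{H}Mx\right\vert ,
\]
so it suffices to show that $\left\vert y^{H}(A\circ B)x\right\vert \le r_{1}(A)\,c_{1}(B)$ uniformly over all unit vectors $x,y$, where $r_{1}(A)$ is the largest Euclidean ($\ell^{2}$) norm among the rows of $A$ and $c_{1}(B)$ the largest Euclidean norm among the columns of $B$.

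First I would expand the bilinear form entrywise and regroup the factors:
\[
y^{H}(A\circ B)x=\sum_{i}\sum_{j}\overline{y_{i}}\,a_{ij}b_{ij}\,x_{j}=\sum_{i}\sum_{j}\bigl(a_{ij}\,\overline{y_{i}}\bigr)\bigl(b_{ij}\,x_{j}\bigr),
\]
where $i$ indexes rows and $j$ indexes columns. Applying Cauchy--Schwarz to the double sum, viewed as the inner product of the arrays $(a_{ij}\overline{y_{i}})_{i,j}$ and $(b_{ij}x_{j})_{i,j}$, gives
\[
\left\vert y^{H}(A\circ B)x\right\vert \le \Bigl(\sum_{i,j}\left\vert a_{ij}\right\vert ^{2}\left\vert y_{i}\right\vert ^{2}\Bigr)^{1/2}\Bigl(\sum_{i,j}\left\vert b_{ij}\right\vert ^{2}\left\vert x_{j}\right\vert ^{2}\Bigr)^{1/2}.
\]

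It then remains to estimate the two factors. For the first I would sum over $j$ first: $\sum_{i,j}\left\vert a_{ij}\right\vert ^{2}\left\vert y_{i}\right\vert ^{2}=\sum_{i}\left\vert y_{i}\right\vert ^{2}\sum_{j}\left\vert a_{ij}\right\vert ^{2}\le r_{1}(A)^{2}\sum_{i}\left\vert y_{i}\right\vert ^{2}=r_{1}(A)^{2}$, using the definition of $r_{1}(A)$ and $\left\Vert y\right\Vert =1$. Symmetrically, summing over $i$ first yields $\sum_{i,j}\left\vert b_{ij}\right\vert ^{2}\left\vert x_{j}\right\vert ^{2}=\sum_{j}\left\vert x_{j}\right\vert ^{2}\sum_{i}\left\vert b_{ij}\right\vert ^{2}\le c_{1}(B)^{2}$. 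Combining the three displays gives $\left\vert y^{H}(A\circ B)x\right\vert \le r_{1}(A)\,c_{1}(B)$ for all unit $x,y$, and taking the maximum over such $x,y$ proves the lemma. There is no genuine obstacle here; the one point requiring care is the choice of pairing in the Cauchy--Schwarz step — one must pair $a_{ij}$ with $y_{i}$ (so that the residual factor is a \emph{row} sum of $A$) and $b_{ij}$ with $x_{j}$ (so that the residual factor is a \emph{column} sum of $B$), since the opposite pairing would instead produce column sums of $A$ and row sums of $B$.
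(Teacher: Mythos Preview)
Your argument is correct: the variational characterization $\left\Vert M\right\Vert_{2}=\max_{\left\Vert x\right\Vert=\left\Vert y\right\Vert=1}\left\vert y^{H}Mx\right\vert$ together with the Cauchy--Schwarz pairing $(a_{ij}\overline{y_{i}})$ against $(b_{ij}x_{j})$ gives exactly the stated bound, and your remark about which pairing to choose is the one genuine point of care.

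As for comparison with the paper: there is nothing to compare. The paper does not prove this lemma at all; it merely quotes it from Horn and Johnson, \textit{Topics in Matrix Analysis} (reference~[9]). Your proof is in fact the standard one found there, so you have supplied what the paper only cites.
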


\begin{definition}
	\cite{10} Difference operator of $f(x)$ is defined as%
		\begin{equation*}
			\Delta f(x)=f(x+1)-f(x).
		\end{equation*}
\end{definition}

\begin{definition}
	\cite{10} A function $f(x)$ with the property that $\Delta f(x)=g(x)$ is called anti-difference operator of $g(x).$
\end{definition}

\begin{lemma}
	\cite{10} If $\Delta f(x)=g(x)$, then%
		\begin{equation*}
			\sum\limits_{a}^{b}g(x)\delta _{x}=\sum\limits_{x=a}^{b-1}g(x)=f(b)-f(a).
	\end{equation*}
\end{lemma}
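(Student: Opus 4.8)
The plan is to prove this by a telescoping argument, the discrete analogue of the fundamental theorem of calculus. First I would recall that the symbol $\sum_{a}^{b} g(x)\,\delta_x$ is, by the finite-calculus convention adopted in \cite{10}, nothing but shorthand for the ordinary sum $\sum_{x=a}^{b-1} g(x)$; hence the first equality in the statement is a matter of unpacking notation and carries no content. The substance is the second equality, $\sum_{x=a}^{b-1} g(x) = f(b) - f(a)$, and this is what I would actually argue.

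To establish it, I would substitute the hypothesis $g(x) = \Delta f(x) = f(x+1) - f(x)$ into the sum, obtaining $\sum_{x=a}^{b-1}\big( f(x+1) - f(x)\big)$. Splitting this into two sums and reindexing the first via $x \mapsto x-1$ yields $\sum_{x=a+1}^{b} f(x) - \sum_{x=a}^{b-1} f(x)$; every term with index in $\{a+1,\dots,b-1\}$ occurs once with a plus sign and once with a minus sign and cancels, leaving precisely $f(b) - f(a)$. An equivalent route is induction on $b$: the base case $b = a$ gives the empty sum equal to $f(a) - f(a) = 0$, and the inductive step appends the single term $g(b) = f(b+1) - f(b)$ to both sides, preserving the identity.

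There is no genuine obstacle here; the only points deserving care are the off-by-one bookkeeping in the upper limit (the definite sum terminates at $b-1$, not $b$) and making the cancellation rigorous rather than gesturing at an informal ``$\cdots$''. Both are handled cleanly by the reindexing step above, so the proof is short and purely formal.
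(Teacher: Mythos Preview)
Your argument is correct: the first equality is definitional, and the second follows from the telescoping identity $\sum_{x=a}^{b-1}\bigl(f(x+1)-f(x)\bigr)=f(b)-f(a)$, which you justify cleanly by reindexing (or, equivalently, by induction on $b$). There is no gap.

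As for comparison with the paper: the paper does not supply a proof of this lemma at all. It is quoted from \cite{10} (Graham, Knuth, Patashnik, \emph{Concrete Mathematics}) as a standard fact of finite calculus, so there is nothing to compare your approach against beyond noting that your telescoping argument is exactly the one given in that reference.
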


\begin{lemma}\cite{10} \label{Lemma2.4}
	 We have
		\begin{equation} \sum_{a}^{b}u(x)\Delta v(x)\delta _{x}=\left. u(x)v(x)\right\vert_{a}^{b+1}-\sum\limits_{a}^{b}v(x+1)\Delta u(x)\delta _{x}.  \label{9}
	\end{equation}
\end{lemma}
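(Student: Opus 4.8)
The plan is to derive the summation-by-parts identity \eqref{9} from the finite-difference analogue of the Leibniz product rule, combined with the discrete fundamental theorem of calculus recorded just above (the statement that $\Delta f = g$ lets one evaluate $\sum_{a}^{b} g(x)\,\delta_{x}$ as the increment of an antidifference $f$ across the summation range). Nothing deep is involved: the whole argument is a short formal manipulation once the product rule is available, and the only point demanding attention is the bookkeeping of the evaluation limits.

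First I would establish the product rule for $\Delta$. Directly from the definition of the difference operator,
\[
\Delta\bigl(u(x)v(x)\bigr)=u(x+1)v(x+1)-u(x)v(x),
\]
and inserting the term $u(x)v(x+1)$ with opposite signs gives
\[
\Delta\bigl(u(x)v(x)\bigr)=u(x)\bigl(v(x+1)-v(x)\bigr)+v(x+1)\bigl(u(x+1)-u(x)\bigr)=u(x)\,\Delta v(x)+v(x+1)\,\Delta u(x).
\]
It matters that one splits off $u(x)v(x+1)$ rather than $u(x+1)v(x)$, since this is exactly what produces the shifted factor $v(x+1)$ that appears on the right-hand side of \eqref{9}.

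Next I would apply the discrete fundamental theorem with antidifference $f(x)=u(x)v(x)$: summing the product-rule identity over $x$ from $a$ to $b$ against $\delta_{x}$, the left-hand side collapses to the boundary term $\left.u(x)v(x)\right\vert_{a}^{b+1}$, while the right-hand side, by linearity of the finite sum, splits as $\sum_{a}^{b}u(x)\,\Delta v(x)\,\delta_{x}+\sum_{a}^{b}v(x+1)\,\Delta u(x)\,\delta_{x}$. Equating the two sides and moving the second sum over isolates
\[
\sum_{a}^{b}u(x)\,\Delta v(x)\,\delta_{x}=\left.u(x)v(x)\right\vert_{a}^{b+1}-\sum_{a}^{b}v(x+1)\,\Delta u(x)\,\delta_{x},
\]
which is precisely \eqref{9}. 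The main (indeed essentially the only) obstacle is keeping the summation and evaluation conventions consistent — in particular reading off the telescoped endpoint at $b+1$ in accordance with the convention fixed in the preceding lemma, and making the asymmetric split in the product rule on the correct side; once those choices are pinned down, the rearrangement is immediate.
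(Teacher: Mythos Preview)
The paper does not give its own proof of this lemma: it is quoted from \cite{10} (Graham--Knuth--Patashnik) without argument, so there is nothing in the paper to compare against. Your derivation is exactly the standard one from that reference --- product rule for $\Delta$ plus telescoping --- and is correct in spirit.

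One small point worth tidying: you assert that the telescoped boundary term is $u(x)v(x)\big|_{a}^{\,b+1}$ ``in accordance with the convention fixed in the preceding lemma,'' but the preceding lemma as stated in the paper gives $\sum_{a}^{b}\Delta f(x)\,\delta_{x}=f(b)-f(a)$, which would produce $u(x)v(x)\big|_{a}^{\,b}$, not $\big|_{a}^{\,b+1}$. The discrepancy is a transcription inconsistency in the paper's pair of lemmas rather than an error in your method, but since you flag the endpoint bookkeeping as the only delicate step, you should not claim the two conventions agree when, as written here, they do not.
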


\begin{lemma}\cite{10}
	 For $m\neq -1$ we have
		\begin{equation*}
			\sum x^{\underline{m}}\delta _{x}=\frac{x^{\underline{m+1}}}{m+1}
		\end{equation*}%
	where $x^{\underline{m}}=x(x-1)(x-2)\ldots (x-m+1).$
\end{lemma}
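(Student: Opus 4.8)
The plan is to unwind the definition of the anti-difference (indefinite sum) operator and reduce the claim to a one-line verification. Saying that $\sum x^{\underline{m}}\,\delta_x = \frac{x^{\underline{m+1}}}{m+1}$ is, by the very definition of $\sum\cdot\,\delta_x$ as an anti-difference, the same as saying that the function $F(x) = \frac{x^{\underline{m+1}}}{m+1}$ satisfies $\Delta F(x) = x^{\underline{m}}$. So the entire task is to compute $\Delta F(x)$ and recognize it as $x^{\underline{m}}$.

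To carry this out I would first record two elementary consequences of the definition $x^{\underline{k}} = x(x-1)\cdots(x-k+1)$: peeling off the leading factor gives $(x+1)^{\underline{m+1}} = (x+1)\,x^{\underline{m}}$, and peeling off the trailing factor gives $x^{\underline{m+1}} = (x-m)\,x^{\underline{m}}$. Subtracting, $(x+1)^{\underline{m+1}} - x^{\underline{m+1}} = \big[(x+1)-(x-m)\big]\,x^{\underline{m}} = (m+1)\,x^{\underline{m}}$, so that
\[
\Delta F(x) = F(x+1)-F(x) = \frac{(x+1)^{\underline{m+1}} - x^{\underline{m+1}}}{m+1} = \frac{(m+1)\,x^{\underline{m}}}{m+1} = x^{\underline{m}},
\]
where the hypothesis $m\neq -1$ is precisely what makes the division by $m+1$ legitimate.

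I do not anticipate any genuine obstacle here: this is a direct verification, and the only point requiring a little care is the index bookkeeping in the two falling-factorial factorizations together with the explicit use of $m+1\neq 0$ (which is why the case $m=-1$, the discrete analogue of antidifferentiating $1/x$, must be excluded). The same computation, run one degree lower, also yields the companion rule $\Delta\big(x^{\underline{m}}\big) = m\,x^{\underline{m-1}}$, which is the form in which this fact enters the summation-by-parts manipulations built on Lemma~\ref{Lemma2.4}.
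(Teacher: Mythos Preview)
Your verification is correct and is the standard one: the identity $\Delta\big(x^{\underline{m+1}}\big) = (m+1)\,x^{\underline{m}}$ follows immediately from the two factorizations $(x+1)^{\underline{m+1}} = (x+1)\,x^{\underline{m}}$ and $x^{\underline{m+1}} = (x-m)\,x^{\underline{m}}$, and dividing by $m+1\neq 0$ gives the anti-difference.

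The paper itself does not prove this lemma; it is quoted from \cite{10} (Graham--Knuth--Patashnik, \emph{Concrete Mathematics}) as a known tool, so there is no argument in the paper to compare against. Your proof is exactly the one given in that reference.
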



\section{Main Results}

\begin{theorem}
\cite{8} Let $C_{1}=Circ(\mathbb{F}_{0},\mathbb{F}_{1},\mathbb{F}_{2},\ldots, \mathbb{F}_{n-1})$ be $n\times n$ circulant matrix. The spectral norm of $C_{1}$ is
	\begin{equation*}
		\left\Vert C_{1}\right\Vert _{2}=n\mathbb{F}_{n}-\sum_{k=0}^{n-1}\frac{k+1}{F_{k+1}}\text{.}
	\end{equation*}
\end{theorem}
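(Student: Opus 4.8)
The plan is to exploit two structural facts. A circulant matrix is normal, so its spectral norm equals its spectral radius; and the entries $\mathbb{F}_{0},\dots,\mathbb{F}_{n-1}$ are all nonnegative, so this spectral radius is attained at the eigenvalue with index $j=0$. Together these reduce the theorem to evaluating the plain sum $\sum_{k=0}^{n-1}\mathbb{F}_{k}$ in closed form, which I would then do with the discrete summation-by-parts formula (\ref{9}) of Lemma~\ref{Lemma2.4}.

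First I would record, using (\ref{0}), that the eigenvalues of $C_{1}=Circ(\mathbb{F}_{0},\dots,\mathbb{F}_{n-1})$ are $\lambda_{j}=\sum_{i=0}^{n-1}\mathbb{F}_{i}(w^{j})^{i}$ for $j=0,\dots,n-1$, where $w=e^{2\pi i/n}$. Since $C_{1}$ is a polynomial in the cyclic shift matrix it commutes with $C_{1}^{H}$, hence $C_{1}$ is normal and $\left\Vert C_{1}\right\Vert_{2}=\max_{0\le j\le n-1}\left\vert\lambda_{j}\right\vert$. Because each $\mathbb{F}_{i}=\sum_{k=1}^{i}1/F_{k}$ is a nonnegative real number, the triangle inequality gives $\left\vert\lambda_{j}\right\vert\le\sum_{i=0}^{n-1}\mathbb{F}_{i}$ for every $j$, with equality at $j=0$ (where $w^{0}=1$); therefore $\left\Vert C_{1}\right\Vert_{2}=\lambda_{0}=\sum_{i=0}^{n-1}\mathbb{F}_{i}$.

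It then remains to show $\sum_{i=0}^{n-1}\mathbb{F}_{i}=n\mathbb{F}_{n}-\sum_{k=0}^{n-1}\frac{k+1}{F_{k+1}}$. I would apply Lemma~\ref{Lemma2.4} to $\sum\mathbb{F}_{x}\,\delta_{x}$ with $u(x)=\mathbb{F}_{x}$ and $\Delta v(x)=1$, i.e. $v(x)=x$; from the definition of the harmonic Fibonacci numbers one has $\Delta\mathbb{F}_{x}=\mathbb{F}_{x+1}-\mathbb{F}_{x}=\frac{1}{F_{x+1}}$, so (\ref{9}) produces the boundary term $\left.x\,\mathbb{F}_{x}\right\vert_{0}^{n}=n\mathbb{F}_{n}$ together with the remaining sum $\sum_{k=0}^{n-1}\frac{k+1}{F_{k+1}}$, which is exactly the asserted expression; combining this with the previous paragraph finishes the proof. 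As a cross-check one can instead interchange the order of summation in $\sum_{i=1}^{n-1}\sum_{k=1}^{i}1/F_{k}$, obtaining $n\mathbb{F}_{n-1}-\sum_{k=1}^{n-1}k/F_{k}$, and verify that this equals the right-hand side via $\mathbb{F}_{n}-\mathbb{F}_{n-1}=1/F_{n}$. I do not expect any genuinely hard step: the only point needing care is the reduction $\left\Vert C_{1}\right\Vert_{2}=\lambda_{0}$, which relies on the normality of circulant matrices and the nonnegativity of the $\mathbb{F}_{i}$, after which the closed form is a single application of summation by parts.
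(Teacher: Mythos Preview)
Your argument is correct. The paper does not actually give a proof of this theorem; it is quoted from reference~\cite{8} and stated without proof, so there is nothing to compare line by line. That said, your approach coincides with the template the paper uses for the closely analogous Theorem~3: invoke the eigenvalue formula~(\ref{0}), use normality of circulant matrices to identify $\left\Vert C\right\Vert_{2}$ with $\max_j|\lambda_j|$, observe via the triangle inequality and nonnegativity of the entries that this maximum is $\lambda_0$, and then evaluate the resulting sum in closed form. The only place you go beyond what the present paper shows is the evaluation of $\sum_{i=0}^{n-1}\mathbb{F}_i$; your summation-by-parts computation with $u(x)=\mathbb{F}_x$, $v(x)=x$ is exactly the kind of application of Lemma~\ref{Lemma2.4} the paper itself employs in Theorem~\ref{Theorem3.5}, and your alternative double-sum check is a nice redundancy. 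No gaps.
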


\begin{theorem}
	\cite{8} Let $C^{(k)}=Circ(\mathbb{F}_{0}^{(k)},\mathbb{F}_{1}^{(k)},\mathbb{F}_{2}^{(k)},\ldots ,\mathbb{F}_{n-1}^{(k)})$ be $n\times n$ circulant matrix. The spectral norm of $C^{(k)}$ is
		\begin{equation*}
			\left\Vert C^{(k)}\right\Vert _{2}=\mathbb{F}_{n-1}^{(k+1)}\text{.}
		\end{equation*}
\end{theorem}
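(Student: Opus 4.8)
The plan is to compute the spectral norm of the circulant matrix $C^{(k)}$ directly, exploiting the fact that for a circulant matrix the spectral norm equals the modulus of the largest eigenvalue when the entries are nonnegative. Since the hyperharmonic Fibonacci numbers $\mathbb{F}_j^{(k)}$ are all nonnegative, the eigenvalue of largest modulus is obtained by taking $w^j = 1$ (i.e. $j=0$) in formula (\ref{0}), so that
\begin{equation*}
\left\Vert C^{(k)}\right\Vert_2 = \max_{0\le j\le n-1}\left\vert \sum_{i=0}^{n-1}\mathbb{F}_i^{(k)}(w^j)^i\right\vert = \sum_{i=0}^{n-1}\mathbb{F}_i^{(k)}.
\end{equation*}
First I would justify this reduction: because $C^{(k)}$ is a normal matrix (every circulant matrix is normal, commuting with the cyclic shift), its spectral norm is exactly its spectral radius $\max_j|\lambda_j|$; and by the triangle inequality each $|\lambda_j| = |\sum_i \mathbb{F}_i^{(k)} w^{ij}| \le \sum_i \mathbb{F}_i^{(k)} = \lambda_0$, so the maximum is attained at $j=0$.

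The remaining task is the combinatorial identity $\sum_{i=0}^{n-1}\mathbb{F}_i^{(k)} = \mathbb{F}_{n-1}^{(k+1)}$. This is essentially immediate from the defining recurrence $\mathbb{F}_n^{(r)} = \sum_{m=1}^n \mathbb{F}_m^{(r-1)}$: taking $r = k+1$ and $n = n-1$ gives $\mathbb{F}_{n-1}^{(k+1)} = \sum_{m=1}^{n-1}\mathbb{F}_m^{(k)}$, and since $\mathbb{F}_0^{(k)} = 0$ (which follows by induction on $k$ from $\mathbb{F}_0 = 0$ and the summation recurrence, the empty sum being zero), we may extend the sum down to $m=0$ to obtain $\sum_{i=0}^{n-1}\mathbb{F}_i^{(k)}$, as desired.

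I would therefore organize the proof in two short steps: (i) invoke normality of circulant matrices together with (\ref{0}) and the nonnegativity of the entries to identify $\left\Vert C^{(k)}\right\Vert_2$ with $\sum_{i=0}^{n-1}\mathbb{F}_i^{(k)}$; (ii) apply the definition of $\mathbb{F}_{n-1}^{(k+1)}$ and the fact $\mathbb{F}_0^{(k)}=0$ to rewrite this sum as $\mathbb{F}_{n-1}^{(k+1)}$. I do not expect a serious obstacle here; the only point requiring a little care is step (i), specifically the claim that the spectral norm of a circulant equals its spectral radius — this rests on normality (so that $C^{(k)}$ is unitarily diagonalizable by the Fourier matrix, whence the singular values coincide with the moduli of the eigenvalues) rather than on any property special to the Fibonacci-type entries. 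Everything else is a one-line consequence of the recursive definition.
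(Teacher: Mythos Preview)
Your proposal is correct. The paper does not actually supply its own proof of this statement---it is quoted from \cite{8} without argument---but your two-step approach (normality of circulants plus the eigenvalue formula~(\ref{0}) to reduce to $\sum_{i=0}^{n-1}\mathbb{F}_i^{(k)}$, then the defining recursion of the hyperharmonic Fibonacci numbers to identify this sum as $\mathbb{F}_{n-1}^{(k+1)}$) is exactly the template the paper itself follows when proving the analogous Theorem~3, so your argument is both valid and in the same spirit.
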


\begin{theorem}
	The spectral norm of the matrix
		\begin{equation*}
			C=Circ(F_{-1}\mathbb{F}_{0},F_{0}\mathbb{F}_{1},\ldots,F_{n-2}\mathbb{F}_{n-1})
		\end{equation*}
	is
		\begin{equation*}
			\left\Vert C\right\Vert _{2}=F_{n}\mathbb{F}_{n}-n.
		\end{equation*}
\end{theorem}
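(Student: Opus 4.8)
The plan is to exploit the fact that $C$ is a circulant matrix all of whose generating entries are nonnegative, so that its spectral norm collapses to the common row sum, and then to evaluate that row sum by means of the combinatorial identity (\ref{3}). First I would record that the generating entries are $c_k = F_{k-1}\mathbb{F}_k$ for $0 \le k \le n-1$. Since $F_{-1}=1$, $\mathbb{F}_0 = 0$, $F_k \ge 0$ for all $k \ge 0$, and $\mathbb{F}_k > 0$ for $k \ge 1$, every $c_k$ is nonnegative (indeed $c_0 = c_1 = 0$).

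Next I would use that a circulant matrix is normal, being unitarily diagonalized by the Fourier matrix, so that $\left\Vert C\right\Vert_2 = \max_{0 \le j \le n-1} |\lambda_j|$, where by (\ref{0}) we have $\lambda_j = \sum_{k=0}^{n-1} c_k (w^j)^k$ with $w = e^{2\pi i/n}$. Applying the triangle inequality together with $|w^{jk}| = 1$ and $c_k \ge 0$ gives
\[
|\lambda_j| \le \sum_{k=0}^{n-1} c_k\,|w^{jk}| = \sum_{k=0}^{n-1} c_k = \lambda_0,
\]
and $\lambda_0$ is itself an eigenvalue of $C$. Hence the maximum modulus is attained at $j=0$ and $\left\Vert C\right\Vert_2 = \lambda_0 = \sum_{k=0}^{n-1} F_{k-1}\mathbb{F}_k$.

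Finally I would invoke the identity (\ref{3}), namely $\sum_{k=0}^{n-1} F_{k-1}\mathbb{F}_k = F_n\mathbb{F}_n - n$, to conclude that $\left\Vert C\right\Vert_2 = F_n\mathbb{F}_n - n$. I do not anticipate a genuine obstacle here; the only step needing a little care is the reduction of the spectral norm of a nonnegative circulant matrix to its row sum (normality of circulants plus the triangle-inequality estimate above), after which the statement is immediate from the already-established identity (\ref{3}). This is the same strategy underlying the earlier theorems quoted from \cite{8}.
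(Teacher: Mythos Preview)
Your proposal is correct and follows essentially the same route as the paper: use the eigenvalue formula~(\ref{0}) for circulants, invoke normality so that $\|C\|_2=\max_j|\lambda_j|$, bound every $|\lambda_j|$ by $\lambda_0$ via the triangle inequality, and finish with identity~(\ref{3}). Your explicit verification that all $c_k=F_{k-1}\mathbb{F}_k\ge 0$ is a nice touch, since that nonnegativity is exactly what makes $\lambda_0=\sum_k c_k$ the maximum modulus.
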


\begin{proof}
	Since $C$ is a circulant matrix, from the (\ref{0}), for all $t=0,1,\ldots,s-1$
		\begin{equation*}
			\lambda _{t}(C)=\sum\limits_{i=0}^{s-1}F_{i-1}\mathbb{F}_{i}(w^{t})^{i}.
		\end{equation*}
	Then for $t=0$,
		\begin{equation}
			\lambda _{0}(C)=\sum\limits_{i=0}^{s-1}F_{i-1}\mathbb{F}_{i}  \label{a}
		\end{equation}
	and from the (\ref{3}), $\lambda _{0}(C)=F_{n}\mathbb{F}_{n}-n.$ Hence, for $1\leq m\leq n-1$, we have
		\begin{equation}
			\left\vert \lambda _{m}\right\vert =\left\vert\sum\limits_{i=0}^{s-1}F_{i-1}\mathbb{F}_{i}(w^{t})^{i}\right\vert \leq \left\vert \sum\limits_{i=0}^{s-1}F_{i-1}\mathbb{F}_{i}\right\vert \left\vert (w^{t})^{i}\right\vert \leq \sum\limits_{i=0}^{s-1}F_{i-1} \mathbb{F}_{i}.  \label{b}
		\end{equation}
	Since $C$ is a normal matrix, we have%
		\begin{equation}
			\left\Vert C\right\Vert _{2}=\max_{0\leq m\leq n-1}\left\vert \lambda_{m}\right\vert .  \label{c}
		\end{equation}
	From the (\ref{a}),(\ref{b}),(\ref{c}) and (\ref{3}), we have
		\begin{equation*}
			\left\Vert C\right\Vert _{2}=F_{n}\mathbb{F}_{n}-n.
		\end{equation*}
\end{proof}

\begin{corollary}
	We have
	\begin{equation*}
	\sqrt{\sum\limits_{k=0}^{n-1}F_{k-1}^{2}\mathbb{F}_{n}^{2}}\leq F_{n}	\mathbb{F}_{n}-n\leq \sqrt{n\sum\limits_{k=0}^{n-1}F_{k-1}^{2}\mathbb{F}	_{n}^{2}}.
	\end{equation*}
\end{corollary}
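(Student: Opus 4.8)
The plan is to bracket the spectral norm $\|C\|_2 = F_n\mathbb{F}_n - n$, whose exact value is supplied by the preceding theorem, between the two multiples of the Euclidean norm of the same matrix $C = Circ(F_{-1}\mathbb{F}_0, F_0\mathbb{F}_1, \ldots, F_{n-2}\mathbb{F}_{n-1})$ furnished by (\ref{7}). Each defining entry $c_k = F_{k-1}\mathbb{F}_k$ appears exactly $n$ times in the circulant $C$, so
\begin{equation*}
\|C\|_E = \sqrt{n\sum_{k=0}^{n-1}F_{k-1}^2\mathbb{F}_k^2}.
\end{equation*}
Everything then reduces to comparing the weights $\mathbb{F}_k$ inside this sum with the single value $\mathbb{F}_n$ that appears in the statement.

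For the right-hand inequality I would use the upper estimate in (\ref{7}), namely $\|C\|_2 \le \|C\|_E$. Since $\mathbb{F}_k = \sum_{j=1}^{k} 1/F_j$ is nondecreasing in $k$, we have $\mathbb{F}_k \le \mathbb{F}_n$ for all $0 \le k \le n-1$, whence $\sum_{k=0}^{n-1}F_{k-1}^2\mathbb{F}_k^2 \le \mathbb{F}_n^2\sum_{k=0}^{n-1}F_{k-1}^2$. Feeding this into the formula for $\|C\|_E$ gives $F_n\mathbb{F}_n - n \le \sqrt{n\sum_{k=0}^{n-1}F_{k-1}^2\mathbb{F}_n^2}$, which is precisely the stated upper bound. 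This half is routine.

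The left-hand inequality is the genuine difficulty, because here the monotonicity points the wrong way: the lower estimate in (\ref{7}) only yields $\sqrt{\sum_{k=0}^{n-1}F_{k-1}^2\mathbb{F}_k^2} \le \|C\|_2$, and replacing $\mathbb{F}_k$ by the larger $\mathbb{F}_n$ on the left can only weaken this into a statement that no longer follows. I would therefore attack the stated lower bound directly. Using $F_{-1}=1$ and the classical identity $\sum_{j=0}^{m}F_j^2 = F_m F_{m+1}$ one gets $\sum_{k=0}^{n-1}F_{k-1}^2 = 1 + F_{n-2}F_{n-1}$, so, after dividing by $\mathbb{F}_n > 0$, the claim becomes equivalent to
\begin{equation*}
\mathbb{F}_n\left(F_n - \sqrt{1 + F_{n-2}F_{n-1}}\right) \ge n.
\end{equation*}

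To handle this reduced inequality I would rationalize the difference: from $F_n^2 - F_{n-2}F_{n-1} = F_{n-1}^2 + F_{n-1}F_{n-2} + F_{n-2}^2$ one writes $F_n - \sqrt{1+F_{n-2}F_{n-1}} = (F_{n-1}^2 + F_{n-1}F_{n-2} + F_{n-2}^2 - 1)/(F_n + \sqrt{1+F_{n-2}F_{n-1}})$, a ratio that grows exponentially, and then combine it with the crude bound $\mathbb{F}_n \ge \mathbb{F}_1 = 1$. The main obstacle is that this estimate is comfortable only once $n$ is moderately large: for the smallest values of $n$ the quantity $F_n\mathbb{F}_n - n$ is so small that the reduced inequality is tight, and in fact has to be checked case by case (or the statement restricted to the range of $n$ on which it actually holds). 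Controlling that low-order threshold, rather than the easy asymptotics, is where I expect the real work to lie.
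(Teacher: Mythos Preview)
Your diagnosis of the lower bound is exactly right, and in fact sharper than you may realize: the inequality as printed, with $\mathbb{F}_n^2$ inside the sum, is simply false for small $n$. For instance at $n=3$ the left side is $\sqrt{(F_{-1}^2+F_0^2+F_1^2)\,\mathbb{F}_3^{\,2}}=\sqrt{2}\cdot\tfrac{5}{2}\approx 3.54$, while the middle term is $F_3\mathbb{F}_3-3=2$; the cases $n=1,2,4$ fail in the same way. So the ``low-order threshold'' you anticipate is not merely tight---it does not exist, and no amount of rationalization will rescue the stated lower bound there.

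The paper's own proof is a single line: it invokes only Definition~\ref{Definition2.1} and the inequality $\tfrac{1}{\sqrt{n}}\|C\|_E\le\|C\|_2\le\|C\|_E$ from~(\ref{7}). Since $\|C\|_E^2=n\sum_{k=0}^{n-1}F_{k-1}^2\mathbb{F}_k^{\,2}$ and $\|C\|_2=F_n\mathbb{F}_n-n$, that argument establishes precisely
\[
\sqrt{\sum_{k=0}^{n-1}F_{k-1}^2\mathbb{F}_k^{\,2}}\;\le\;F_n\mathbb{F}_n-n\;\le\;\sqrt{n\sum_{k=0}^{n-1}F_{k-1}^2\mathbb{F}_k^{\,2}},
\]
with $\mathbb{F}_k$ rather than $\mathbb{F}_n$ under the radicals. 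The printed $\mathbb{F}_n$ is almost certainly a typographical slip: once it is read as $\mathbb{F}_k$, the paper's one-line proof is complete, and your monotonicity step for the upper bound and your direct attack on the lower bound both become unnecessary. Your instinct that something was wrong with the left-hand inequality was correct; the resolution is not a harder proof but a corrected statement.
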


\begin{proof}
	The proof is trivial from the Definition \ref{Definition2.1} and the relation between Euclidean norm and spectral norm in (\ref{7}).
\end{proof}

\begin{theorem} \label{Theorem3.5}
		Let $C_{r}^{(k)}=Circ(\mathbb{F}_{0}^{(k)},\mathbb{F}_{1}^{(k)},\ldots , \mathbb{F}_{n-1}^{(k)})$ be $n\times n$ $r-$ circulant matrix. The Euclidean norm of $C_{r}^{(k)}$ is
			\begin{equation*}
				\left\Vert C_{r}^{(k)}\right\Vert _{E}=\left[ \frac{n}{2}\left( 	n+1+(n-1)\left\vert r\right\vert ^{2}\right) \left( \mathbb{F}		_{n}^{(k)}\right) ^{2}-\frac{1}{2}\sum\limits_{s=0}^{n-1}(s+1)\left( 2n+s(\left\vert r\right\vert ^{2}-1)\right) (\mathbb{F}_{s+1}^{(k-1)}+2		\mathbb{F}_{s}^{(k)})\mathbb{F}_{s+1}^{(k-1)}\right] ^{\frac{1}{2}}.
			\end{equation*}
\end{theorem}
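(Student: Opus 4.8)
The plan is to compute $\left\Vert C_r^{(k)}\right\Vert_E^2$ directly from Definition \ref{Definition2.1} by counting, for each entry of the $r$-circulant matrix, which power of $|r|$ it carries and which hyperharmonic Fibonacci number $\mathbb{F}_s^{(k)}$ appears. In an $r$-circulant matrix $Circ(\mathbb{F}_0^{(k)},\ldots,\mathbb{F}_{n-1}^{(k)})$ the entry $\mathbb{F}_s^{(k)}$ (for $s=0,\ldots,n-1$) occurs on the ``super-diagonal'' part without the factor $r$ exactly $n-s$ times, and on the ``sub-diagonal'' part with a factor $r$ exactly $s$ times. Hence
\begin{equation*}
	\left\Vert C_r^{(k)}\right\Vert_E^2 = \sum_{s=0}^{n-1}\bigl(n-s + s|r|^2\bigr)\left(\mathbb{F}_s^{(k)}\right)^2 .
\end{equation*}
So everything reduces to evaluating the two sums $\sum_{s=0}^{n-1}\left(\mathbb{F}_s^{(k)}\right)^2$ and $\sum_{s=0}^{n-1} s\left(\mathbb{F}_s^{(k)}\right)^2$ in closed form.

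The next step is to evaluate these sums using the discrete (finite) calculus machinery collected in Section 2, namely summation by parts (Lemma \ref{Lemma2.4}) together with the recurrence $\mathbb{F}_{s}^{(k)} - \mathbb{F}_{s-1}^{(k)} = \mathbb{F}_s^{(k-1)}$, i.e. $\Delta \mathbb{F}_{s-1}^{(k)} = \mathbb{F}_s^{(k-1)}$ viewed through the difference operator. I would write $\left(\mathbb{F}_s^{(k)}\right)^2 = \mathbb{F}_s^{(k)}\cdot\mathbb{F}_s^{(k)}$ and apply summation by parts with $u(s)=\mathbb{F}_s^{(k)}$ and $\Delta v(s)$ chosen so that $v(s)$ telescopes; the cross terms produce combinations $\mathbb{F}_{s+1}^{(k-1)}\bigl(\mathbb{F}_{s+1}^{(k-1)}+2\mathbb{F}_s^{(k)}\bigr)$, which is exactly the shape appearing in the claimed formula. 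For the weighted sum $\sum s\left(\mathbb{F}_s^{(k)}\right)^2$ I would treat the linear weight $s = s^{\underline{1}}$ as an anti-difference target and again integrate by parts, using $\sum x^{\underline{m}}\,\delta_x = \frac{x^{\underline{m+1}}}{m+1}$. Collecting the boundary terms at $s=0$ and $s=n$ (where $\mathbb{F}_0^{(k)}=0$ kills the lower limit) and regrouping the coefficient of $(s+1)$ gives the factor $2n + s(|r|^2-1)$ and the overall $\tfrac12$.

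Finally I would substitute $\sum_{s=0}^{n-1}\left(\mathbb{F}_s^{(k)}\right)^2$ and $\sum_{s=0}^{n-1} s\left(\mathbb{F}_s^{(k)}\right)^2$ back into $n\sum - (|r|^2-1)^{-1}\cdots$ — more precisely into $\left\Vert C_r^{(k)}\right\Vert_E^2 = n\sum_{s=0}^{n-1}\left(\mathbb{F}_s^{(k)}\right)^2 - (1-|r|^2)\sum_{s=0}^{n-1} s\left(\mathbb{F}_s^{(k)}\right)^2$, simplify, and take the square root. The first term should contribute the $\tfrac{n}{2}(n+1+(n-1)|r|^2)\left(\mathbb{F}_n^{(k)}\right)^2$ piece once the telescoped form of $\sum\left(\mathbb{F}_s^{(k)}\right)^2$ is inserted. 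The main obstacle I anticipate is purely bookkeeping: keeping the boundary terms from the two successive summations-by-parts straight and verifying that the index shifts ($\mathbb{F}_{s+1}^{(k-1)}$ versus $\mathbb{F}_s^{(k-1)}$, and the ranges $0$ to $n-1$ versus $0$ to $n$) combine to produce precisely the stated coefficient $(s+1)\bigl(2n+s(|r|^2-1)\bigr)$ rather than an off-by-one variant; a careful check of the $n=1$ and $n=2$ cases against the explicit matrix would be the sanity test I would run.
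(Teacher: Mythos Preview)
Your overall strategy matches the paper's: start from Definition~\ref{Definition2.1}, count how often each $\mathbb{F}_s^{(k)}$ appears with and without the factor $r$ to obtain
\[
\left\Vert C_r^{(k)}\right\Vert_E^2=\sum_{s=0}^{n-1}\bigl(n+s(|r|^2-1)\bigr)\left(\mathbb{F}_s^{(k)}\right)^2,
\]
and then evaluate this sum via the Abel summation formula of Lemma~\ref{Lemma2.4}. Where you diverge is in the execution. You propose to split the weight $n+s(|r|^2-1)$ into its constant and linear parts, evaluate $\sum\left(\mathbb{F}_s^{(k)}\right)^2$ and $\sum s\left(\mathbb{F}_s^{(k)}\right)^2$ separately (with $u(s)=\mathbb{F}_s^{(k)}$), and then recombine. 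The paper instead applies Lemma~\ref{Lemma2.4} \emph{once}, taking $u(s)=\left(\mathbb{F}_s^{(k)}\right)^2$ and $\Delta v(s)=n+s(|r|^2-1)$ directly; then $\Delta u(s)=\mathbb{F}_{s+1}^{(k-1)}\bigl(\mathbb{F}_{s+1}^{(k-1)}+2\mathbb{F}_s^{(k)}\bigr)$ (which is exactly the factored form of $(\mathbb{F}_{s+1}^{(k)})^2-(\mathbb{F}_s^{(k)})^2$) and $v(s)=ns+\tfrac{s^{\underline{2}}}{2}(|r|^2-1)$, so that $v(s+1)=\tfrac{1}{2}(s+1)\bigl(2n+s(|r|^2-1)\bigr)$ falls out immediately. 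This single-step choice produces the boundary term $\tfrac{n}{2}\bigl(n+1+(n-1)|r|^2\bigr)\left(\mathbb{F}_n^{(k)}\right)^2$ and the summand coefficient $(s+1)\bigl(2n+s(|r|^2-1)\bigr)$ in one line and eliminates precisely the off-by-one bookkeeping you flagged as the main obstacle. Your route would also arrive at the answer, but the paper's choice of $u$ and $\Delta v$ is the cleaner packaging.
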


\begin{proof}
	From the definition of Euclidean norm we have,
		\begin{eqnarray*}
			\left\Vert C_{r}^{(k)}\right\Vert _{E} &=&\left[ \sum_{s=0}^{n-1}(n-s)\left( \mathbb{F}_{s}^{(k)}\right) ^{2}+\sum_{s=0}^{n-1}s\left\vert r\right\vert^{2}\left( \mathbb{F}_{s}^{(k)}\right) ^{2}\right] ^{\frac{1}{2}} \\
			&=&\left[ \sum_{s=0}^{n-1}(n+s(\left\vert r\right\vert ^{2}-1))\left( \mathbb{F}_{s}^{(k)}\right) ^{2}\right] ^{\frac{1}{2}}.
		\end{eqnarray*}
	Now we will use property of difference operator in Lemma \ref{Lemma2.4} Let $u(s)=\left( \mathbb{F}_{s}^{(k)}\right) ^{2}$ and $\Delta v(s)=n+s(\left\vert r\right\vert ^{2}-1)$. Then using the definition of hyperharmonic Fibonacci numbers we obtain $\Delta u(s)=\mathbb{F} _{s+1}^{(k-1)}(\mathbb{F}_{s+1}^{(k-1)}+2\mathbb{F}_{s}^{(k)})$ and $v(s)=ns+\frac{s^{\underline{2}}}{2}(\left\vert r\right\vert ^{2}-1).$ By using the equation (\ref{9}), we have
		\begin{equation*}
			\left\Vert C_{r}^{(k)}\right\Vert _{E}=\left[ \frac{n}{2}\left(n+1+(n-1)\left\vert r\right\vert ^{2}\right) \left( \mathbb{F}_{n}^{(k)}\right)^{2}-\frac{1}{2}\sum\limits_{s=0}^{n-1}(s+1)\left(2n+s(\left\vert r\right\vert ^{2}-1)\right) (\mathbb{F}_{s+1}^{(k-1)}+2\mathbb{F}_{s}^{(k)})\mathbb{F}_{s+1}^{(k-1)}\right] ^{\frac{1}{2}}.
		\end{equation*}
\end{proof}

\begin{corollary}
	Let $C_{r}=Circ(\mathbb{F}_{0},\mathbb{F}_{1},\ldots ,\mathbb{F}_{n-1})$ be $n\times n$ $r-$circulant matrix. The Euclidean norm of $C_{r}$ is
		\begin{equation*}
			\left\Vert C_{r}\right\Vert _{E}=\left[ \left( n^{2}+\frac{n^{\underline{2}}}{2}(\left\vert r\right\vert ^{2}-1)\right) \mathbb{F}_{n}^{2}-\sum\limits_{s=0}^{n-1}\left(n(s+1)+\frac{(s+1)^{^{\underline{2}}}}{2}(\left\vert r\right\vert ^{2}-1)\right) \left( 2\mathbb{F}_{s}+\frac{1}{F_{s+1}}\right) \frac{1}{F_{s+1}}\right] ^{\frac{1}{2}}.
		\end{equation*}
\end{corollary}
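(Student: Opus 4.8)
The plan is to recognise this as the case $k=1$ of Theorem \ref{Theorem3.5}. First I would record that, straight from the definitions, $\mathbb{F}_n^{(1)} = \sum_{k=1}^{n}\mathbb{F}_k^{(0)} = \sum_{k=1}^{n}\tfrac{1}{F_k} = \mathbb{F}_n$, so that $\mathbb{F}_s^{(1)} = \mathbb{F}_s$ for every $s$ while $\mathbb{F}_{s+1}^{(0)} = \tfrac{1}{F_{s+1}}$. Consequently $C_r = Circ(\mathbb{F}_0,\mathbb{F}_1,\ldots,\mathbb{F}_{n-1})$ is exactly the matrix $C_r^{(1)}$, and putting $k=1$ in Theorem \ref{Theorem3.5} gives
\begin{equation*}
\left\Vert C_r\right\Vert_E = \left[\frac{n}{2}\bigl(n+1+(n-1)|r|^2\bigr)\mathbb{F}_n^2 - \frac{1}{2}\sum_{s=0}^{n-1}(s+1)\bigl(2n+s(|r|^2-1)\bigr)\left(\frac{1}{F_{s+1}}+2\mathbb{F}_s\right)\frac{1}{F_{s+1}}\right]^{1/2}.
\end{equation*}

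It then remains only to rewrite the two polynomial coefficients using falling factorials. Since $n^{\underline{2}} = n(n-1)$ one has $\tfrac{n}{2}(n+1+(n-1)|r|^2) = \tfrac{n(n+1)}{2} + \tfrac{n(n-1)}{2}|r|^2 = n^2 + \tfrac{n^{\underline{2}}}{2}(|r|^2-1)$, and since $(s+1)^{\underline{2}} = (s+1)s$ one has $\tfrac{1}{2}(s+1)(2n+s(|r|^2-1)) = n(s+1) + \tfrac{(s+1)^{\underline{2}}}{2}(|r|^2-1)$. Substituting these identities into the bracket above produces precisely the claimed expression for $\left\Vert C_r\right\Vert_E$.

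If one prefers a self-contained argument, the proof of Theorem \ref{Theorem3.5} can be rerun verbatim at this level: the Euclidean norm squared of $C_r$ equals $\sum_{s=0}^{n-1}(n-s)\mathbb{F}_s^2 + \sum_{s=0}^{n-1}s|r|^2\mathbb{F}_s^2 = \sum_{s=0}^{n-1}\bigl(n+s(|r|^2-1)\bigr)\mathbb{F}_s^2$; one then applies the summation-by-parts formula (\ref{9}) with $u(s) = \mathbb{F}_s^2$ and $\Delta v(s) = n+s(|r|^2-1)$, where $v(s) = ns+\tfrac{s^{\underline{2}}}{2}(|r|^2-1)$ and, because $\mathbb{F}_{s+1}-\mathbb{F}_s = \tfrac{1}{F_{s+1}}$, one has $\Delta u(s) = \mathbb{F}_{s+1}^2-\mathbb{F}_s^2 = \tfrac{1}{F_{s+1}}\bigl(2\mathbb{F}_s+\tfrac{1}{F_{s+1}}\bigr)$.

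There is no real difficulty in this corollary; the only things needing attention are the evaluation of the boundary term $u(s)v(s)\big|_0^n$ in (\ref{9})---where $\mathbb{F}_0 = 0$ kills the lower endpoint, leaving $\mathbb{F}_n^2\,v(n) = \bigl(n^2 + \tfrac{n^{\underline{2}}}{2}(|r|^2-1)\bigr)\mathbb{F}_n^2$---and the routine check that the falling-factorial coefficients agree term-by-term with what is inherited from Theorem \ref{Theorem3.5}.
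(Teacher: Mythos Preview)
Your proposal is correct and follows exactly the paper's approach: the paper's entire proof is ``It is clear that the proof can be completed if we take $k=1$ in Theorem \ref{Theorem3.5}.'' You simply flesh out that specialisation, verifying $\mathbb{F}_s^{(1)}=\mathbb{F}_s$, $\mathbb{F}_{s+1}^{(0)}=1/F_{s+1}$, and rewriting the coefficients via $n^{\underline{2}}=n(n-1)$ and $(s+1)^{\underline{2}}=(s+1)s$; the optional self-contained rerun of the summation-by-parts argument is likewise just Theorem \ref{Theorem3.5}'s proof with $k=1$ plugged in.
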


\begin{proof}
	It is clear that the proof can be completed if we take $k=1$ in Theorem \ref{Theorem3.5}
\end{proof}

\begin{corollary}
	\cite{8} Let $C_{1}=Circ(\mathbb{F}_{0},\mathbb{F}_{1},\ldots ,\mathbb{F}_{n-1})$ be $n\times n$ matrix. The Euclidean norm is%
		\begin{equation*}
			\left\Vert C_{1}\right\Vert _{E}=\left[ n^{2}\mathbb{F}_{n}^{2}-n\sum\limits_{k=0}^{n-1}\frac{k+1}{F_{k+1}}\left( 2\mathbb{F}_{k}+\frac{1}{F_{k+1}}\right) \right] ^{\frac{1}{2}}.
		\end{equation*}
\end{corollary}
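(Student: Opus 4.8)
\emph{The plan is to specialise the previous corollary.} Setting $r=1$ (equivalently $\lvert r\rvert =1$) makes the factor $\lvert r\rvert^{2}-1$ vanish, so every term carrying a falling factorial $n^{\underline{2}}$ or $(s+1)^{\underline{2}}$ disappears and the displayed formula for $\lVert C_{r}\rVert_{E}$ collapses to $\bigl[\,n^{2}\mathbb{F}_{n}^{2}-\sum_{s=0}^{n-1} n(s+1)\bigl(2\mathbb{F}_{s}+\tfrac{1}{F_{s+1}}\bigr)\tfrac{1}{F_{s+1}}\bigr]^{1/2}$; pulling the common factor $n$ out of the sum and relabelling $s\mapsto k$ gives exactly the claimed identity. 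Since the summation-by-parts step in the proof of Theorem~\ref{Theorem3.5} (with $v(s)=ns+\tfrac{s^{\underline{2}}}{2}(\lvert r\rvert^{2}-1)$) is valid for every $r$, nothing extra needs to be checked for this specialisation.

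For a self-contained argument one can instead start from Definition~\ref{Definition2.1}. In the circulant matrix $C_{1}$ each of the entries $\mathbb{F}_{0},\mathbb{F}_{1},\dots,\mathbb{F}_{n-1}$ occurs exactly $n$ times (once in each row), so $\lVert C_{1}\rVert_{E}^{2}=n\sum_{s=0}^{n-1}\mathbb{F}_{s}^{2}$. To evaluate this sum in closed form I would apply Lemma~\ref{Lemma2.4} with $u(s)=\mathbb{F}_{s}^{2}$ and $\Delta v(s)=1$, so that $v(s)=s$. From $\mathbb{F}_{s+1}-\mathbb{F}_{s}=1/F_{s+1}$ one gets $\Delta u(s)=\mathbb{F}_{s+1}^{2}-\mathbb{F}_{s}^{2}=\tfrac{1}{F_{s+1}}\bigl(2\mathbb{F}_{s}+\tfrac{1}{F_{s+1}}\bigr)$, whence $\sum_{s=0}^{n-1}\mathbb{F}_{s}^{2}= \left. s\mathbb{F}_{s}^{2}\right|_{0}^{n}-\sum_{s=0}^{n-1}(s+1)\,\Delta u(s) = n\mathbb{F}_{n}^{2}-\sum_{s=0}^{n-1}\tfrac{s+1}{F_{s+1}}\bigl(2\mathbb{F}_{s}+\tfrac{1}{F_{s+1}}\bigr)$. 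Multiplying by $n$ and taking the square root reproduces the formula.

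There is essentially no genuine obstacle here; the only points that require care are purely bookkeeping: confirming the multiplicity count for circulant entries, evaluating the boundary term $\left. s\mathbb{F}_{s}^{2}\right|_{0}^{n}=n\mathbb{F}_{n}^{2}$ (the $s=0$ endpoint contributes nothing, both because of the factor $s$ and because $\mathbb{F}_{0}=0$), and correctly using the shifted antidifference $v(s+1)=s+1$ inside the summation-by-parts identity (\ref{9}). I expect the one-line specialisation argument to be the version worth presenting, with the direct computation mentioned only as an alternative.
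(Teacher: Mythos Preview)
Your proposal is correct and your primary approach---specialising the previous corollary by setting $r=1$ so that the $\lvert r\rvert^{2}-1$ terms vanish---is exactly the paper's own proof, which simply says ``take $k=r=1$ in Theorem~\ref{Theorem3.5}.'' Your second, self-contained derivation via Definition~\ref{Definition2.1} and summation by parts is a valid alternative not given in the paper, but since the paper presents only the one-line specialisation, that is the version to keep.
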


\begin{proof}
	It is easily seen that the proof can be completed if we take $k=r=1$ in Theorem \ref{Theorem3.5}
\end{proof}

Now we give upper and lower bounds for the spectral norms of $r-$circulant matrices.

\begin{theorem}\label{Theorem3.8}
	Let $C_{r}^{(k)}=Circ(\mathbb{F}_{0}^{(k)},\mathbb{F}_{1}^{(k)},\ldots ,\mathbb{F}_{n-1}^{(k)})$ be $n\times n$ $r-$circulant matrix. \\
	$i)$ If $\left\vert r\right\vert \geq 1$, then
		\begin{equation*}
			\frac{1}{\sqrt{n}}\mathbb{F}_{n-1}^{(k+1)}\leq \left\Vert C_{r}^{(k)}\right\Vert _{2}\leq \left\vert r\right\vert \sqrt{n-1} \: \mathbb{F}_{n-1}^{(k+1)}.
		\end{equation*} \\
	$ii)$ If $\left\vert r\right\vert <1$, then
		\begin{equation*}
			\frac{\left\vert r\right\vert}{\sqrt{n}}\mathbb{F}_{n-1}^{(k+1)}\leq \left\Vert C_{r}^{(k)}\right\Vert _{2}\leq \sqrt{n-1}\mathbb{F}_{n-1}^{(k+1)}.
		\end{equation*}
\end{theorem}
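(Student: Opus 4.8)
Since the statement is a pair of two-sided estimates, the plan is to derive the \emph{lower} bounds from the left inequality in (\ref{7}), $\tfrac{1}{\sqrt{n}}\left\Vert C_r^{(k)}\right\Vert_E\le\left\Vert C_r^{(k)}\right\Vert_2$, after evaluating the Euclidean norm, and to derive the \emph{upper} bounds from a Hadamard factorization of $C_r^{(k)}$ together with the Hadamard--product lemma that estimates $\left\Vert A\circ B\right\Vert_2$ by $r_1(A)\,c_1(B)$. Two elementary observations drive everything: because $\mathbb{F}_0^{(k)}=0$, the matrix $C_r^{(k)}$ has vanishing main diagonal, and by the definition of the hyperharmonic Fibonacci numbers $\sum_{s=0}^{n-1}\mathbb{F}_s^{(k)}=\sum_{s=1}^{n-1}\mathbb{F}_s^{(k)}=\mathbb{F}_{n-1}^{(k+1)}$.

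\textbf{Lower bounds.} First I would count entries (as in the proof of Theorem \ref{Theorem3.5}) to write
\[
\left\Vert C_r^{(k)}\right\Vert_E^2=\sum_{s=0}^{n-1}\bigl(n-s+|r|^2 s\bigr)\bigl(\mathbb{F}_s^{(k)}\bigr)^2 ,
\]
the $n-s$ coming from the superdiagonal copies of $\mathbb{F}_s^{(k)}$ and the $s$ from the subdiagonal copies of $r\mathbb{F}_s^{(k)}$. When $|r|\ge1$ the coefficient satisfies $n-s+|r|^2s\ge n$, and when $|r|<1$ one has $n-s+|r|^2s\ge |r|^2 n$ because $(n-s)(1-|r|^2)\ge0$; in either case I would then apply the Cauchy--Schwarz inequality to the $n$ summands, $n\sum_{s=0}^{n-1}(\mathbb{F}_s^{(k)})^2\ge\bigl(\sum_{s=0}^{n-1}\mathbb{F}_s^{(k)}\bigr)^2=(\mathbb{F}_{n-1}^{(k+1)})^2$, and feed the result into (\ref{7}). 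This yields $\left\Vert C_r^{(k)}\right\Vert_2\ge\tfrac{1}{\sqrt{n}}\mathbb{F}_{n-1}^{(k+1)}$ in case $i)$ and $\left\Vert C_r^{(k)}\right\Vert_2\ge\tfrac{|r|}{\sqrt{n}}\mathbb{F}_{n-1}^{(k+1)}$ in case $ii)$.

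\textbf{Upper bounds.} Here I would factor $C_r^{(k)}=A\circ B$ with $B=Circ(\mathbb{F}_0^{(k)},\mathbb{F}_1^{(k)},\ldots,\mathbb{F}_{n-1}^{(k)})$ the ordinary circulant matrix and $A=(a_{ij})$ given by $a_{ij}=1$ for $j>i$, $a_{ij}=r$ for $j<i$, and $a_{ii}=0$ --- the last being legitimate precisely because the diagonal of $C_r^{(k)}$ is zero. Since each column of $B$ is a rearrangement of $\mathbb{F}_0^{(k)},\ldots,\mathbb{F}_{n-1}^{(k)}$, one gets $c_1(B)=\sqrt{\sum_{s=0}^{n-1}(\mathbb{F}_s^{(k)})^2}\le\sum_{s=0}^{n-1}\mathbb{F}_s^{(k)}=\mathbb{F}_{n-1}^{(k+1)}$; and the $i$-th row of $A$ contributes $\sum_j|a_{ij}|^2=(n-i)+(i-1)|r|^2=(n-|r|^2)+i(|r|^2-1)$, which is monotone in $i$, so its maximum over $1\le i\le n$ is $(n-1)|r|^2$ (attained at $i=n$) when $|r|\ge1$ and $n-1$ (attained at $i=1$) when $|r|<1$; that is, $r_1(A)=|r|\sqrt{n-1}$ or $r_1(A)=\sqrt{n-1}$ respectively. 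Plugging these into $\left\Vert C_r^{(k)}\right\Vert_2=\left\Vert A\circ B\right\Vert_2\le r_1(A)\,c_1(B)$ gives exactly the asserted upper bounds.

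\textbf{Where the difficulty lies.} The manipulations above are routine; the only genuinely load-bearing point is to use $\mathbb{F}_0^{(k)}=0$, i.e.\ the vanishing diagonal of $C_r^{(k)}$. Keeping a nonzero diagonal would give $r_1(A)^2=1+(n-1)|r|^2$, hence only the cruder constant $|r|\sqrt{n}$; likewise a direct Euclidean bound $\left\Vert C_r^{(k)}\right\Vert_2\le\left\Vert C_r^{(k)}\right\Vert_E$ or the weaker lemma $\left\Vert A\circ B\right\Vert_2\le\left\Vert A\right\Vert_2\left\Vert B\right\Vert_2$ (for which $\left\Vert A\right\Vert_2\ge r_1(A)$ and $\left\Vert A\right\Vert_E$ is of order $n$) does not reach the sharp $\sqrt{n-1}$. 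So the care needed is merely in bookkeeping the diagonal and in choosing the $r_1(A)$--$c_1(B)$ form of the Hadamard lemma rather than the $\left\Vert\cdot\right\Vert_2\left\Vert\cdot\right\Vert_2$ form.
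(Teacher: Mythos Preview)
Your argument is correct and is essentially the paper's own proof. The paper uses the same Euclidean--norm count for the lower bounds, the same Hadamard factorization $C_r^{(k)}=A\circ B$ (with $\mathbb{F}_0^{(k)}=0$ on the diagonal of $A$) together with the $r_1(A)\,c_1(B)$ lemma for the upper bounds, and in place of your inline Cauchy--Schwarz and $\ell^2\le\ell^1$ estimates it quotes the ready-made inequality $\tfrac{1}{\sqrt{n}}\mathbb{F}_{n-1}^{(k+1)}\le\sqrt{\sum_{s=0}^{n-1}(\mathbb{F}_s^{(k)})^2}\le\mathbb{F}_{n-1}^{(k+1)}$ from \cite{8}, which is exactly what your two elementary observations prove.
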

\begin{proof}
	Since the matrix
		\begin{equation*}
			C_{r}^{(k)}=
				\begin{pmatrix}
					\mathbb{F}_{0}^{(k)} & \mathbb{F}_{1}^{(k)} & \mathbb{F}_{2}^{(k)} & \ldots & \mathbb{F}_{n-2}^{(k)} & \mathbb{F}_{n-1}^{(k)} \\ r\mathbb{F}_{n-1}^{(k)} & \mathbb{F}_{0}^{(k)} & \mathbb{F}_{1}^{(k)} & \ldots & \mathbb{F}_{n-3}^{(k)} & \mathbb{F}_{n-2}^{(k)} \\ \vdots & \vdots & \vdots &  & \vdots & \vdots \\ r\mathbb{F}_{2}^{(k)} & r\mathbb{F}_{3}^{(k)} & r\mathbb{F}_{4}^{(k)} & \ldots & \mathbb{F}_{0}^{(k)} & \mathbb{F}_{1}^{(k)} \\ r\mathbb{F}_{1}^{(k)} & r\mathbb{F}_{2}^{(k)} & r\mathbb{F}_{3}^{(k)} & \ldots & r\mathbb{F}_{n-1}^{(k)} & \mathbb{F}_{0}^{(k)}
				\end{pmatrix},
		\end{equation*}
	we have
		\begin{equation*}
			\left\Vert C_{r}^{(k)}\right\Vert _{E}=\sqrt{\sum_{s=0}^{n-1}(n-s)\left( \mathbb{F}_{s}^{(k)}\right) ^{2}+\sum_{s=0}^{n-1}s\left\vert r\right\vert^{2}\left( \mathbb{F}_{s}^{(k)}\right) ^{2}.}
		\end{equation*}
	$i)$ In \cite{8}, for the sum of the squares of hyperharmonic Fibonacci numbers, we have
		\begin{equation}
			\frac{1}{\sqrt{n}}\mathbb{F}_{n-1}^{(r+1)}\leq \sqrt{\sum_{k=0}^{n-1}\left( \mathbb{F}_{k}^{(r)}\right) ^{2}}\leq \mathbb{F}_{n-1}^{(r+1)}.  \label{10}
		\end{equation}
	Since $\left\vert r\right\vert \geq 1$ and by (\ref{10}), we have
		\begin{equation*}
			\left\Vert C_{r}^{(k)}\right\Vert _{E}\geq \sqrt{\sum_{s=0}^{n-1}(n-s)\left( \mathbb{F}_{s}^{(k)}\right) ^{2}+\sum_{s=0}^{n-1}s\left( \mathbb{F}_{s}^{(k)}\right) ^{2}}\geq \sqrt{n\sum_{s=0}^{n-1}\left( \mathbb{F}_{s}^{(k)}\right) ^{2}}\geq \mathbb{F}_{n-1}^{(k+1)}.
		\end{equation*}
	From (\ref{7})
		\begin{equation*}
			\frac{1}{\sqrt{n}}\mathbb{F}_{n-1}^{(k+1)}\leq \left\Vert C_{r}^{(k)}\right\Vert _{2}.
		\end{equation*}
	On the other hand, let the matrices $A$ and $B$ be as
		\begin{equation*}
			A=
				\begin{pmatrix}
					\mathbb{F}_{0}^{(k)} & 1 & 1 & \ldots & 1 & 1 \\ r & \mathbb{F}_{0}^{(k)} & 1 & \ldots & 1 & 1 \\ \vdots & \vdots & \vdots &  & \vdots & \vdots \\ r & r & r & \ldots & \mathbb{F}_{0}^{(k)} & 1 \\ r & r & r & \ldots & r & \mathbb{F}_{0}^{(k)}
				\end{pmatrix}
		\end{equation*}
	and
		\begin{equation*}
			B=
				\begin{pmatrix}
					\mathbb{F}_{0}^{(k)} & \mathbb{F}_{1}^{(k)} & \mathbb{F}_{2}^{(k)} & \ldots & \mathbb{F}_{n-2}^{(k)} & \mathbb{F}_{n-1}^{(k)} \\ \mathbb{F}_{n-1}^{(k)} & \mathbb{F}_{0}^{(k)} & \mathbb{F}_{1}^{(k)} & \ldots & \mathbb{F}_{n-3}^{(k)} & \mathbb{F}_{n-2}^{(k)} \\ \vdots & \vdots & \vdots &  & \vdots & \vdots \\ \mathbb{F}_{2}^{(k)} & \mathbb{F}_{3}^{(k)} & \mathbb{F}_{4}^{(k)} & \ldots & \mathbb{F}_{0}^{(k)} & \mathbb{F}_{1}^{(k)} \\ \mathbb{F}_{1}^{(k)} & \mathbb{F}_{2}^{(k)} & \mathbb{F}_{3}^{(k)} & \ldots & \mathbb{F}_{n-1}^{(k)} & \mathbb{F}_{0}^{(k)}
				\end{pmatrix},		
		\end{equation*}
	That is $C_{r}^{(k)}=A\circ B.$ Then we obtain
		\begin{equation*}
			r_{1}(A)=\max_{1\leq i\leq n}\sqrt{\sum\limits_{j=1}^{n}\left\vert a_{ij}\right\vert ^{2}}=\sqrt{\sum\limits_{j=1}^{n}\left\vert a_{nj}\right\vert ^{2}}=\sqrt{( n-1) \left\vert r\right\vert ^{2}}
		\end{equation*}
	and
		\begin{equation*}
			c_{1}(B)=\max_{1\leq j\leq n}\sqrt{\sum\limits_{i=1}^{n}\left\vert b_{ij}\right\vert ^{2}}=\sqrt{\sum\limits_{i=1}^{n}\left\vert b_{in}\right\vert ^{2}}=\sqrt{\sum_{s=0}^{n-1}\left( \mathbb{F}_{s}^{(k)}\right) ^{2}.}
		\end{equation*}
	Hence, from the (\ref{10}) and Lemma \ref{Lemma2.1}, we have
		\begin{equation*}
			\left\Vert C_{r}^{(k)}\right\Vert _{2}\leq \left\vert r\right\vert \sqrt{ n-1} \, \mathbb{F}_{n-1}^{(k+1)}.
		\end{equation*}
	Thus, we have
		\begin{equation*}
			\frac{1}{\sqrt{n}}\mathbb{F}_{n-1}^{(k+1)}\leq \left\Vert C_{r}^{(k)}\right\Vert _{2}\leq \left\vert r\right\vert \sqrt{ n-1} \, \mathbb{F}_{n-1}^{(k+1)}
		\end{equation*}
	$ii)$ Since $\left\vert r\right\vert <1$ and from the (\ref{10}), we have
		\begin{eqnarray*}
			\left\Vert C_{r}^{(k)}\right\Vert _{E} &=&\sqrt{\sum_{s=0}^{n-1}(n-s)\left( \mathbb{F}_{s}^{(k)}\right) ^{2}+\sum_{s=0}^{n-1}s\left\vert r\right\vert ^{2}\left( \mathbb{F}_{s}^{(k)}\right) ^{2}} \\ &\geq &\sqrt{\sum_{s=0}^{n-1}(n-s)\left\vert r\right\vert ^{2}\left( \mathbb{F}_{s}^{(k)}\right) ^{2}+\sum_{s=0}^{n-1}s\left\vert r\right\vert ^{2}\left(\mathbb{F}_{s}^{(k)}\right) ^{2}} \\ &=&\left\vert r\right\vert \sqrt{n\sum_{s=0}^{n-1}\left( \mathbb{F}_{s}^{(k)}\right) ^{2}} \\ &\geq &\left\vert r\right\vert \mathbb{F}_{n-1}^{(k+1)}. 
		\end{eqnarray*} 
	From( \ref{7}),
		\begin{equation*}
			\frac{\left\vert r\right\vert}{\sqrt{n}}\mathbb{F}_{n-1}^{(k+1)}\leq \left\Vert C_{r}^{(k)}\right\Vert _{2}.
		\end{equation*}
	On the other hand, let the matrices $A$ and $B$ be as%
		\begin{equation*}
			A=
				\begin{pmatrix}
					\mathbb{F}_{0}^{(k)} & 1 & 1 & \ldots & 1 & 1 \\ r & \mathbb{F}_{0}^{(k)} & 1 & \ldots & 1 & 1 \\ \vdots & \vdots & \vdots &  & \vdots & \vdots \\ r & r & r & \ldots & \mathbb{F}_{0}^{(k)} & 1 \\ r & r & r & \ldots & r & \mathbb{F}_{0}^{(k)}
				\end{pmatrix}
		\end{equation*}
	and
		\begin{equation*}
			\begin{pmatrix}
				\mathbb{F}_{0}^{(k)} & \mathbb{F}_{1}^{(k)} & \mathbb{F}_{2}^{(k)} & \ldots& \mathbb{F}_{n-2}^{(k)} & \mathbb{F}_{n-1}^{(k)} \\ \mathbb{F}_{n-1}^{(k)} & \mathbb{F}_{0}^{(k)} & \mathbb{F}_{1}^{(k)} & \ldots& \mathbb{F}_{n-3}^{(k)} & \mathbb{F}_{n-2}^{(k)} \\ \vdots & \vdots & \vdots &  & \vdots & \vdots \\ \mathbb{F}_{2}^{(k)} & \mathbb{F}_{3}^{(k)} & \mathbb{F}_{4}^{(k)} & \ldots& \mathbb{F}_{0}^{(k)} & \mathbb{F}_{1}^{(k)} \\ \mathbb{F}_{1}^{(k)} & \mathbb{F}_{2}^{(k)} & \mathbb{F}_{3}^{(k)} & \ldots& \mathbb{F}_{n-1}^{(k)} & \mathbb{F}_{0}^{(k)}
			\end{pmatrix}.
		\end{equation*}
	Thus $C_{r}^{(k)}=A\circ B.$ Then we obtain
		\begin{equation*}
			r_{1}(A)=\max_{1\leq i\leq n}\sqrt{\sum\limits_{j=1}^{n}\left\vert a_{ij}\right\vert ^{2}}=\sqrt{\left( \mathbb{F}_{0}^{(k)}\right) ^{2}+n-1}=\sqrt{n-1}
		\end{equation*}
	and
		\begin{equation*}
			c_{1}(B)=\max_{1\leq j\leq n}\sqrt{\sum\limits_{i=1}^{n}\left\vert b_{ij}\right\vert ^{2}}=\sqrt{\sum_{s=0}^{n-1}\left( \mathbb{F}_{s}^{(k)}\right) ^{2}}
		\end{equation*}
	Therefore, from the (\ref{10}) and Lemma \ref{Lemma2.1}, we have
		\begin{equation*}
			\left\Vert C_{r}^{(k)}\right\Vert _{2}\leq \sqrt{n-1}\,\mathbb{F}_{n-1}^{(k+1)}.
		\end{equation*}
	Thus, we have
		\begin{equation*}
			\frac{\left\vert r\right\vert}{\sqrt{n}}\mathbb{F}_{n-1}^{(k+1)}\leq \left\Vert C_{r}^{(k)}\right\Vert _{2}\leq \sqrt{n-1}\,\mathbb{F}_{n-1}^{(k+1)}.
		\end{equation*}
\end{proof}

\begin{corollary}
	Let $C_{r}=Circ(\mathbb{F}_{0},\mathbb{F}_{1},\ldots ,\mathbb{F}_{n-1})$ be $n\times n$ $r-$circulant matrix. \\
	$i)$ If $\left\vert r\right\vert \geq 1$, then
		\begin{equation*}
			\frac{1}{\sqrt{n}}\mathbb{F}_{n-1}^{(2)}\leq \left\Vert C_{r}\right\Vert_{2}\leq \left\vert r\right\vert \sqrt{ n-1} \, \mathbb{F}_{n-1}^{(2)}.
		\end{equation*}
	$ii)$ If $\left\vert r\right\vert <1$, then%
		\begin{equation*}
			\frac{\left\vert r\right\vert}{\sqrt{n}}\mathbb{F}_{n-1}^{(2)}\leq \left\Vert C_{r}\right\Vert_{2}\leq \sqrt{n-1}\mathbb{F}_{n-1}^{(2)}.
		\end{equation*}
\end{corollary}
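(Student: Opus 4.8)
The plan is to obtain this as the $k=1$ specialization of Theorem~\ref{Theorem3.8}. The first step is to check that the harmonic Fibonacci numbers are precisely the first-order hyperharmonic Fibonacci numbers. Indeed, from $\mathbb{F}_n^{(0)}=\tfrac1{F_n}$ and the defining recursion $\mathbb{F}_n^{(r)}=\sum_{j=1}^n\mathbb{F}_j^{(r-1)}$ one gets $\mathbb{F}_n^{(1)}=\sum_{j=1}^n\mathbb{F}_j^{(0)}=\sum_{j=1}^n\tfrac1{F_j}=\mathbb{F}_n$. Consequently the $r$-circulant matrix $C_r=Circ(\mathbb{F}_0,\ldots,\mathbb{F}_{n-1})$ is literally the matrix $C_r^{(1)}=Circ(\mathbb{F}_0^{(1)},\ldots,\mathbb{F}_{n-1}^{(1)})$ to which Theorem~\ref{Theorem3.8} applies with $k=1$, and the quantity $\mathbb{F}_{n-1}^{(k+1)}$ there becomes $\mathbb{F}_{n-1}^{(2)}$.

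Second, one simply substitutes $k=1$ into the two displayed inequality chains of Theorem~\ref{Theorem3.8}. For $|r|\ge1$ this yields $\tfrac1{\sqrt n}\mathbb{F}_{n-1}^{(2)}\le\|C_r\|_2\le|r|\sqrt{n-1}\,\mathbb{F}_{n-1}^{(2)}$, and for $|r|<1$ it yields $\tfrac{|r|}{\sqrt n}\mathbb{F}_{n-1}^{(2)}\le\|C_r\|_2\le\sqrt{n-1}\,\mathbb{F}_{n-1}^{(2)}$, which are exactly the two asserted statements. No further work is needed.

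If a self-contained argument is preferred instead, one can reproduce the proof of Theorem~\ref{Theorem3.8} verbatim with $k=1$: write $\|C_r\|_E=\bigl(\sum_{s=0}^{n-1}(n-s)\mathbb{F}_s^2+\sum_{s=0}^{n-1}s|r|^2\mathbb{F}_s^2\bigr)^{1/2}$, use inequality~(\ref{10}) with $r=1$, namely $\tfrac1{\sqrt n}\mathbb{F}_{n-1}^{(2)}\le\bigl(\sum_{k=0}^{n-1}\mathbb{F}_k^2\bigr)^{1/2}\le\mathbb{F}_{n-1}^{(2)}$, for the lower bounds, and factor $C_r=A\circ B$ with the same auxiliary matrices $A$ and $B$, applying Lemma~\ref{Lemma2.1} through $r_1(A)$ and $c_1(B)$ for the upper bounds; the case split $|r|\ge1$ versus $|r|<1$ arises exactly as in that proof, according to whether the constant factor or the $r$-dependent factor in $r_1(A)$ dominates. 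There is no real obstacle here: all the substance sits in Theorem~\ref{Theorem3.8}, and this corollary is merely its instance at $k=1$ with $r$ arbitrary.
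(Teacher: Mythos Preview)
Your proposal is correct and matches the paper's own proof, which consists of the single sentence ``it is easily seen that the proof can be completed if we take $k=1$ in Theorem~\ref{Theorem3.8}.'' Your explicit verification that $\mathbb{F}_n^{(1)}=\mathbb{F}_n$ and the optional self-contained reproduction of the Theorem~\ref{Theorem3.8} argument are more detailed than what the paper provides, but the approach is identical.
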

\begin{proof}
It is easily seen that the proof can be completed if we take $k=1$ in Theorem \ref{Theorem3.8}
\end{proof}



\begin{thebibliography}{99}
\bibitem{1} S. Solak, On the norms of circulant matrices with the Fibonacci and Lucas numbers, \textit{Appl. Math. Comput}. \textbf{160} (2005) 125--132.

\bibitem{2} E. G. Kocer, T. Mansour, N. Tuglu, Norms of Circulant and Semicirculant Matrices with Horadams Numbers, \textit{Ars Combinatoria}, \textbf{85} (2007) 353-359.  

\bibitem{3} S. Q. Shen, J.M. Cen, On the bounds for the norms of $r-$circulant matrices with Fibonacci and Lucas numbers, \textit{Appl. Math. Comput}. \textbf{216 }(2010) 2891--2897

\bibitem{4} J. Zhou, The Identical Estimates of Spectral Norms for Circulant Matrices with Binomial Coefficients Combined with Fibonacci Numbers and Lucas Numbers Entries, \textit{J. Funct. Spaces Appl. }\textbf{Article ID 518913}, 7 (2014).

\bibitem{5} M. Bahsi, S. Solak, On the norms of $r-$circulant matrices with the hyper-Fibonacci and Lucas numbers, \textit{J. Math. Inequal. }\textbf{8 (4)},(2014), 693-705.

\bibitem{6} Z. Jiang, J. Zhou, A note on spectral norms of even-order r-circulant matrices,\textit{Appl. Math. Comput. }\textbf{250 }(2015) 368--371.

\bibitem{7} J. Zhou, X. Chen, Z. Jiang, The Explicit Identities for Spectral Norms of Circulant-Type Matrices Involving Binomial Coefficients and Harmonic Numbers, \textit{Math. Probl.Eng. }\textbf{Article ID 672398}, 5 (2014).

\bibitem{8} N. Tuglu, C. K\i z\i late\c{s}, S. Kesim, On the harmonic and hyperharmonic Fibonacci numbers (submitted).

\bibitem{9} R. A. Horn, C. R. Johnson, \textit{Topics in Matrix Analysis}, Cambridge University Press, 1991.

\bibitem{10} R. Graham, D. Knuth, K. Patashnik, O. \textit{Concrete Mathematics}, Addison Wesley, 1989.

\bibitem{11} R. A. Horn, C. R. Johnson, \textit{Matrix Analysis}, Cambridge University Press, Cambridge, UK, 1985.

\end{thebibliography}
\end{document}